\newtheorem{Theorem}{Theorem}
\newtheorem{Corollary}[Theorem]{Corollary}
\newtheorem{Definition}[Theorem]{Definition}
\newtheorem{Example}[Theorem]{Example}
\newtheorem{Remark}[Theorem]{Remark}
\begin{document}

\title{A Non-Newtonian Noether's Symmetry Theorem\thanks{This is a preprint 
of a paper whose final and definite form is published in 'Applicable Analysis',
Print ISSN: 0003-6811, Online ISSN: 1563-504X. Submitted 02-Aug-2021, 
Accepted for publication 22-Nov-2021.}}

\author{Delfim F. M. Torres\\
\texttt{delfim@ua.pt}}

\date{R\&D Unit CIDMA, Department of Mathematics,\\ 
University of Aveiro, 3810-193 Aveiro, Portugal}

\maketitle


\begin{abstract}
The universal principle obtained by Emmy Noether in 1918,
asserts that the invariance of a variational problem with respect 
to a one-parameter family of symmetry transformations
implies the existence of a conserved quantity along
the Euler--Lagrange extremals. Here we prove Noether's 
theorem for the recent non-Newtonian calculus of variations. 
The proof is based on a new necessary optimality condition
of DuBois--Reymond type.

\medskip

\noindent \textbf{Keywords:} 
non-Newtonian calculus of variations;
Euler--Lagrange extremals; 
DuBois--Reymond condition;
Erdmann condition; 
symmetry Noether's theorem.

\medskip

\noindent \textbf{MSC 2020:} 26A24; 49K05; 49S05. 
\end{abstract}


\section{Introduction}

Recently, it has been shown that the non-Newtonian/multiplicative calculus
introduced by Grossman and Katz in \cite{MR0430173} is very useful 
in some problems of actuarial science, finance, economics, 
biology, demography, pattern recognition, signal processing, 
thermostatistics, and quantum information theory
\cite{MR2864779,e22101180,MORA20121245,MR3452941,MR4188122}.
Additionally, a mathematical problem, which is difficult or 
impossible to solve in one calculus, can be easily 
revealed through another calculus \cite{MR4247781,MR3794497}.
The literature on non-Newtonian calculus is rich and accessible,
and we assume the reader to be familiar with it. If this is not the case,
we refer, e.g., to \cite{MR4247781,MR695495,MR4188122}. Here we follow
the notations and the results published open access in \cite{delfim50}. 
We just recall: the basic four operations,
$x \oplus y = x \cdot y$;
$x \ominus y = \frac{x}{y}$;
$x \odot y = x^{\ln(y)}$;
$x \oslash y = x^{1\slash\ln(y)}$, $y \neq 1$;
the fact that $\left(\mathbb{R}^{+},\oplus,\odot\right)$ 
forms a field; and that with such arithmetic a real analysis is available,
together with all fundamental topological properties for the 
non-Newtonian metric space and a full calculus, including non-Newtonian differential
and integral equations \cite{PAP2008368,MR3073475,duyar2015some,MR3463533,MR4104356,MR4188122,MR4170025}.
We also refer the reader to the recent book \cite{book2021:Burgin:Czachor}.

Motivated by applications in economics, physics, and biology, 
in which the variational functionals to be minimized are not of the standard
form but given by a product or a quotient of integral functionals
and where admissible functions take positive values only, the author 
has recently introduced the non-Newtonian calculus
of variations, which allows to solve such multiplicative
variational problems in a rather standard way, using 
non-Newtonian variations of the form $x \oplus \epsilon \odot h$:
see \cite{delfim50}. Let $(t,x,v) \mapsto L(t,x,v)$ be a given
$C^2\left(\mathbb{R}^+\times\mathbb{R}^+\times\mathbb{R}^+;\mathbb{R}^+\right)$
function, called the Lagrangian. The fundamental problem of the non-Newtonian
calculus of variations consists to minimize the integral functional
\begin{equation}
\label{eqT:J}
\mathcal{F}\left[x(\cdot)\right] =
\strokedint_a^b L\left(t,x(t),\widetilde{x}(t)\right) \tilde{d}t
\end{equation}
over the class 
\begin{equation}
\label{eq:X}
\mathcal{X} := \left\{x \in C^2\left(\mathbb{R}^+;\mathbb{R}^+\right) : 
x(a) = \alpha,\  x(b) = \beta,\  x(t) > 0\  \forall \ t \in [a,b] \right\},  
\end{equation}
where $\widetilde{x}(t)$ is the non-Newtonian derivative of function $x$
at (positive) time $t$ and $\displaystyle \strokedint$ is the non-Newtonian (multiplicative)
integral \cite{delfim50}. The problem is solved with the help of the 
Euler--Lagrange differential equation,
\begin{equation}
\label{eqT:EL}
\frac{\tilde{d}}{\tilde{d}t}
\left[\frac{\widetilde{\partial} L}{\widetilde{\partial} v}\left(t,x(t),\widetilde{x}(t)\right)\right]
= \frac{\widetilde{\partial} L}{\widetilde{\partial} x}\left(t,x(t),\widetilde{x}(t)\right),
\end{equation}
which is a first-order necessary optimality condition \cite{delfim50}.
Each solution of \eqref{eqT:EL} is called an Euler--Lagrange extremal.

In 1918, Emmy Noether (1882--1935) established a general theorem asserting 
that the invariance of a variational integral functional under a
family of transformations depending smoothly on a real parameter $s$,
implies the existence of a conserved quantity along the Euler--Lagrange
extremals \cite{JFM46.0770.01,MR53:10538}. As corollaries, all the conservation 
laws known to classical mechanics are easily obtained. For a survey of Noether's 
theorem and its generalizations, see \cite{MR89e:49002,MR28:3353,MR2000m:49002,MR83c:70020}.
In the present article, we show that Noether's theorem is still valid 
in the non-Newtonian setting (Theorem~\ref{Th:MainResult}). 
In order to prove such result, we first prove 
a non-Newtonian DuBois--Reymond condition (Theorem~\ref{thm:1}). 
In the  particular case when the Lagrangian is autonomous, that is,
when the Lagrangian $L$ does not depend on the independent variable $t$
(i.e., $L(t,x,v) = L(x,v)$), we obtain a second Erdmann type equation
(Corollary~\ref{thm:Erd}). Two illustrative examples of application
of the obtained non-Newtonian Noether symmetry theorem are given: 
non-Newtonian conservation of energy, when the problem
is invariant under multiplicative time translations $t \mapsto t \oplus s = s \cdot t$
(Example~\ref{ex1}); non-Newtonian conservation of momentum, when the problem
is invariant under multiplicative space translations $x \mapsto x \oplus s = s \cdot x$
(Example~\ref{ex2}).


\section{The DuBois--Reymond Condition}

The importance of DuBois--Reymond optimality condition, 
and its relation with Noether's symmetry theorem, is well known 
in the literature of the calculus of variations
\cite{MR3392640,MR2098297}. We begin by proving a non-Newtonian 
DuBois--Reymond necessary optimality condition.

\begin{Theorem}[DuBois--Reymond condition]
\label{thm:1}	
If $x(t)$, $t \in [a,b]$, is a solution to problem
\begin{equation}
\label{P}
\tag{$P$}
\mathcal{F}[x] = \strokedint_{a}^{b}
L\left(t,x(t),\widetilde{x}(t)\right) \tilde{d}t 
\longrightarrow \min_{x \in \mathcal{X}},
\end{equation}
with $\mathcal{X}$ as in \eqref{eq:X},
then $x(t)$ satisfies the DuBois--Reymond condition
\begin{equation}
\label{eqT:DBR}
\frac{\widetilde{\partial} L}{\widetilde{\partial} t}\left(t,x(t),\widetilde{x}(t)\right) 
= \frac{\tilde{d}}{\tilde{d}t} \left\{
L\left(t,x(t),\widetilde{x}(t)\right)
\ominus \frac{\widetilde{\partial} L}{\widetilde{\partial} v}\left(t,x(t),\widetilde{x}(t)\right) 
\odot \widetilde{x}(t)\right\} 
\end{equation}
for all $t \in [a,b]$.
\end{Theorem}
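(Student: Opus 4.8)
The plan is to transport the classical proof of the DuBois--Reymond condition into the non-Newtonian setting by exploiting the fact that $(\mathbb{R}^+,\oplus,\odot)$ is a field isomorphic to the usual reals via the logarithm. Concretely, I would first establish (or recall from \cite{delfim50}) the precise non-Newtonian counterparts of the total-derivative chain rule and the product rule, since the right-hand side of \eqref{eqT:DBR} is a total non-Newtonian derivative of a difference ($\ominus$) of a product ($\odot$). The key structural fact I would lean on is that under the isomorphism $\ln\colon(\mathbb{R}^+,\oplus,\odot)\to(\mathbb{R},+,\cdot)$, the multiplicative integral functional $\mathcal{F}[x]$ and the operations $\oplus,\ominus,\odot,\oslash$ all pull back to their ordinary additive/multiplicative analogues, so that the non-Newtonian Euler--Lagrange equation \eqref{eqT:EL} is the exact image of the classical one for a transformed Lagrangian.

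My second step is to expand the total non-Newtonian derivative on the right-hand side of \eqref{eqT:DBR} using the chain rule. Writing $v=\widetilde{x}(t)$, the total derivative of $L(t,x(t),\widetilde{x}(t))$ with respect to $t$ decomposes, by the non-Newtonian chain rule, into the sum (i.e., $\oplus$) of three terms: the explicit partial $\widetilde{\partial}L/\widetilde{\partial}t$, the term $(\widetilde{\partial}L/\widetilde{\partial}x)\odot\widetilde{x}(t)$, and the term $(\widetilde{\partial}L/\widetilde{\partial}v)\odot\tfrac{\tilde d}{\tilde dt}\widetilde{x}(t)$. Simultaneously, the non-Newtonian product rule applied to $\tfrac{\tilde d}{\tilde dt}\bigl[(\widetilde{\partial}L/\widetilde{\partial}v)\odot\widetilde{x}(t)\bigr]$ yields $\tfrac{\tilde d}{\tilde dt}(\widetilde{\partial}L/\widetilde{\partial}v)\odot\widetilde{x}(t)\;\oplus\;(\widetilde{\partial}L/\widetilde{\partial}v)\odot\tfrac{\tilde d}{\tilde dt}\widetilde{x}(t)$. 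Subtracting ($\ominus$) the second expansion from the first, the two terms containing $\tfrac{\tilde d}{\tilde dt}\widetilde{x}(t)$ cancel, leaving
\begin{equation*}
\frac{\widetilde{\partial}L}{\widetilde{\partial}t}
\;\oplus\;\frac{\widetilde{\partial}L}{\widetilde{\partial}x}\odot\widetilde{x}(t)
\;\ominus\;\frac{\tilde d}{\tilde dt}\!\left[\frac{\widetilde{\partial}L}{\widetilde{\partial}v}\right]\odot\widetilde{x}(t).
\end{equation*}

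The final step is to invoke the Euler--Lagrange equation \eqref{eqT:EL}, which holds because $x$ is a solution of \eqref{P} and hence an extremal. Substituting $\tfrac{\tilde d}{\tilde dt}[\widetilde{\partial}L/\widetilde{\partial}v]=\widetilde{\partial}L/\widetilde{\partial}x$ into the displayed expression makes the last two terms cancel (their $\ominus$-difference is the neutral element $1$ of $\oplus$), leaving exactly $\widetilde{\partial}L/\widetilde{\partial}t$, which is the left-hand side of \eqref{eqT:DBR}. The main obstacle I anticipate is purely bookkeeping rather than conceptual: one must be scrupulous about which arithmetic ($\oplus,\odot$ versus ordinary $+,\cdot$) appears at each stage, and in particular verify that the non-Newtonian chain rule and product rule genuinely take the additive forms asserted above. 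The cleanest way to discharge this is to apply $\ln$ throughout at the outset, reduce the entire identity to the classical DuBois--Reymond relation for the transformed Lagrangian $\widehat{L}=\ln\circ L$, cite the classical result, and then transport back; the algebraic cancellations are then automatic and the only real work is confirming the isomorphism respects all the differential operators involved.
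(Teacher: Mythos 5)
Your proposal is correct and follows essentially the same route as the paper: expand the non-Newtonian total derivative of $L \ominus \frac{\widetilde{\partial} L}{\widetilde{\partial} v} \odot \widetilde{x}(t)$ via the non-Newtonian chain and product rules, cancel the two terms containing $\frac{\tilde{d}}{\tilde{d}t}\widetilde{x}(t)$, and substitute the Euler--Lagrange equation \eqref{eqT:EL} so that the remaining bracket collapses to the neutral element $1$, leaving exactly $\frac{\widetilde{\partial} L}{\widetilde{\partial} t}$ as in \eqref{eqT:DBR}. Your additional suggestion of transporting the whole identity through the field isomorphism $\ln\colon(\mathbb{R}^{+},\oplus,\odot)\to(\mathbb{R},+,\cdot)$ is a legitimate way to justify the non-Newtonian chain and product rules, but it is only a wrapper around the same direct computation that the paper performs.
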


\begin{proof}
We show the DuBois--Reymond first-order necessary optimality
condition \eqref{eqT:DBR} as a consequence of the Euler--Lagrange
equation \eqref{eqT:EL}. Let $x(t)$, $t \in [a,b]$, be a minimizer
to problem \eqref{P}. For simplicity, in what follows we omit the 
arguments, being clear that all partial derivatives of the Lagrangian 
$L$ are evaluated at $\left(t,x(t),\widetilde{x}(t)\right)$.
A direct computation shows that the right-hand side of \eqref{eqT:DBR}
is given by
\begin{equation}
\label{eq:1}
\begin{split}
\frac{\tilde{d}}{\tilde{d}t} \left\{
L \ominus \frac{\widetilde{\partial} L}{\widetilde{\partial} v} 
\odot \widetilde{x}(t)\right\} 
&= \frac{\widetilde{\partial} L}{\widetilde{\partial} t} \odot e
\oplus 
\frac{\widetilde{\partial} L}{\widetilde{\partial} x} \odot \widetilde{x}(t)
\oplus \frac{\widetilde{\partial} L}{\widetilde{\partial} v} \odot \widetilde{\widetilde{x}}(t)
\ominus \left[\frac{\tilde{d}}{\tilde{d}t}\left(
\frac{\widetilde{\partial} L}{\widetilde{\partial} v}\right) \odot \widetilde{x}(t)
\oplus \frac{\widetilde{\partial} L}{\widetilde{\partial} v} \odot \widetilde{\widetilde{x}}(t)\right]\\
&= \frac{\widetilde{\partial} L}{\widetilde{\partial} t}
\oplus
\left[\frac{\widetilde{\partial} L}{\widetilde{\partial} x} 
\ominus \frac{\tilde{d}}{\tilde{d}t}\left(
\frac{\widetilde{\partial} L}{\widetilde{\partial} v}\right)\right]
\odot \widetilde{x}(t).
\end{split}
\end{equation}
Since $x(\cdot)$ is a solution of \eqref{P},
it must satisfy the Euler--Lagrange equation
$\frac{\widetilde{\partial} L}{\widetilde{\partial} x} 
\ominus \frac{\tilde{d}}{\tilde{d}t}\left(
\frac{\widetilde{\partial} L}{\widetilde{\partial} v}\right) = 1$,
and \eqref{eq:1} simplifies to
$$
\frac{\tilde{d}}{\tilde{d}t} \left\{
L \ominus \frac{\widetilde{\partial} L}{\widetilde{\partial} v} 
\odot \widetilde{x}(t)\right\} 
= \frac{\widetilde{\partial} L}{\widetilde{\partial} t},
$$
which proves the intended result.
\end{proof}

In the autonomous case, when the Lagrangian $L$ 
does not depend on the independent variable $t$,
that is, $L(t,x,v) = L(x,v)$, we get, as
an immediate corollary of our Theorem~\ref{thm:1}, 
a non-Newtonian Erdmann necessary optimality condition.

\begin{Corollary}[Erdmann condition]
\label{thm:Erd}	
If $x(t)$, $t \in [a,b]$, is a solution to the autonomous 
variational problem
\begin{equation}
\label{Pa}
\mathcal{F}[x] = \strokedint_{a}^{b}
L\left(x(t),\widetilde{x}(t)\right) \tilde{d}t 
\longrightarrow \min_{x \in \mathcal{X}},
\end{equation}
where $\mathcal{X}$ is defined in \eqref{eq:X},
then $x(t)$ satisfies the Erdmann condition
\begin{equation}
\label{eq:Erd}
L\left(x(t),\widetilde{x}(t)\right)
\ominus \frac{\widetilde{\partial} L}{\widetilde{\partial} v}\left(x(t),\widetilde{x}(t)\right) 
\odot \widetilde{x}(t) = \text{ constant }
\end{equation}
for all $t \in [a,b]$.
\end{Corollary}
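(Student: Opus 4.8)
The plan is to derive the Erdmann condition \eqref{eq:Erd} directly from Theorem~\ref{thm:1}, specializing the DuBois--Reymond condition \eqref{eqT:DBR} to the autonomous case. Since the Lagrangian no longer depends explicitly on the independent variable $t$, the partial derivative $\frac{\widetilde{\partial} L}{\widetilde{\partial} t}$ vanishes in the non-Newtonian sense; that is, it equals the multiplicative neutral element $e$ (the constant function $1$ in the underlying arithmetic, since the exponential is the non-Newtonian identity). First I would observe that an autonomous Lagrangian $L(x,v)$ is a special case of the general Lagrangian $L(t,x,v)$ appearing in problem \eqref{P}, so any minimizer $x(t)$ of \eqref{Pa} is in particular a minimizer of a problem of the form \eqref{P} and hence, by Theorem~\ref{thm:1}, satisfies \eqref{eqT:DBR}.

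Next I would compute $\frac{\widetilde{\partial} L}{\widetilde{\partial} t}$ for the autonomous Lagrangian and argue that it equals the neutral element $1$ of the field $\left(\mathbb{R}^{+},\oplus,\odot\right)$. Substituting this into the right-hand side of \eqref{eqT:DBR}, the DuBois--Reymond condition reduces to the statement that the non-Newtonian derivative of the expression
$$
L\left(x(t),\widetilde{x}(t)\right)
\ominus \frac{\widetilde{\partial} L}{\widetilde{\partial} v}\left(x(t),\widetilde{x}(t)\right)
\odot \widetilde{x}(t)
$$
equals $1$ for all $t \in [a,b]$. The final step is to invoke the fundamental theorem of the non-Newtonian calculus (or the characterization of functions with trivial multiplicative derivative) to conclude that a quantity whose non-Newtonian derivative is the neutral element must itself be constant in $t$. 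This yields exactly \eqref{eq:Erd}.

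I do not anticipate a genuine obstacle here, since the result is an immediate corollary once the vanishing of $\frac{\widetilde{\partial} L}{\widetilde{\partial} t}$ is properly interpreted. The one point requiring care is the translation between the additive intuition (``a function with zero derivative is constant'') and the multiplicative setting, where ``zero derivative'' means the non-Newtonian derivative equals $1$ and ``constant'' retains its usual meaning. I would make explicit that in the non-Newtonian calculus the role of the additive zero is played by the multiplicative neutral element $1$, so that the absence of explicit $t$-dependence in $L$ forces $\frac{\widetilde{\partial} L}{\widetilde{\partial} t} = 1$, and that this is the precise analogue of the classical fact used to pass from DuBois--Reymond to Erdmann. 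With that identification in place, the corollary follows at once from \eqref{eqT:DBR}.
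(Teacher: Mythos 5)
Your proposal is correct and takes essentially the same route as the paper: autonomy forces $\frac{\widetilde{\partial} L}{\widetilde{\partial} t} = 1$, so the DuBois--Reymond condition \eqref{eqT:DBR} says the non-Newtonian derivative of $L \ominus \frac{\widetilde{\partial} L}{\widetilde{\partial} v} \odot \widetilde{x}(t)$ equals the neutral element $1$, hence that quantity is constant. (One minor slip: the neutral element playing the role of additive zero is $1$, the identity of $\oplus$, not $e$, which is the identity of $\odot$; you correct this yourself later and the rest of your argument uses $1$, exactly as the paper does.)
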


\begin{proof}
Since the Lagrangian $L$ does not depend explicitly on $t$, 
one has $\frac{\widetilde{\partial} L}{\widetilde{\partial} t} = 1$ 
and \eqref{eqT:DBR} reduces to \eqref{eq:Erd}.
\end{proof}

Corollary~\ref{thm:Erd}	already shows the importance of
DuBois--Reymond condition to establish \emph{constants of motion},
that is, quantities, like the one given by the left-hand side 
of equality \eqref{eq:Erd}, that are conserved along 
the solutions of a problem of the calculus of variations. 
The constant of motion given by the Erdmann condition
is obtained under the assumption that the Lagrangian
is autonomous, that is, time-invariant (i.e., there
exists a symmetry of the problem under time translations). 
Such relation between the invariance of the problem (or the existence
of symmetry transformations) and the existence of a constant of motion 
is the subject of our next section.


\section{Noether's Symmetry Theorem}
\label{SecT:MR}

Before formulating the Noether theorem 
for non-Newtonian extremals
of the calculus of variations, we need
to introduce a notion of invariance.
We require the symmetry transformation 
to leave the problem invariant up to first
order terms in the parameter $s$ and up to exact
differentials. Such exact differentials are 
known in physics as \emph{gauge-terms} 
\cite{MR83c:70020,MR1980565}.

\begin{Definition}[invariance]
\label{def:QIUGT}
Consider a $s$-parameter family of $C^1$-transformations
\begin{equation}
\label{eq:f:transf}
(t,x) \longrightarrow \left(T(t,x,\widetilde{x},s),X(t,x,\widetilde{x},s)\right)
\end{equation}
that reduce to the identity for $s = 1$, that is,
\begin{equation*}
T(t,x,v,1) = t, \quad X(t,x,v,1) = x,
\end{equation*}
for any $t, x, v \in \mathbb{R}^+$.
We say that the integral functional \eqref{eqT:J} is invariant under 
the one-parameter family of $C^1$-transformations \eqref{eq:f:transf}
up to the gauge-term $\Phi(t,x,\widetilde{x})$ if, and only if,
\begin{multline}
\label{eqT:defQI}
\frac{\tilde{d}}{\tilde{d}t} \Phi\left(t,x(t),\widetilde{x}(t)\right)
= \frac{\tilde{d}}{\tilde{d}s} \Biggl\{
L\Biggl(T(t,x(t),\widetilde{x}(t),s),X(t,x(t),\widetilde{x}(t),s), \\
\left.\left.\left.
\frac{\tilde{d}X}{\tilde{d}t}(t,x(t),\widetilde{x}(t),s)
\oslash \frac{\tilde{d}T}{\tilde{d}t}(t,x(t),\widetilde{x}(t),s)\right) \odot
\frac{\tilde{d} T}{\tilde{d}t}\left(t,x(t),\widetilde{x}(t),s\right)\right\}\right|_{s = 1}
\end{multline}
for all $x(\cdot) \in C^2\left([a,b]; \mathbb{R}^+\right)$.
\end{Definition}

\begin{Remark}
Similarly to the 1918 paper of Emmy Noether \cite{JFM46.0770.01,MR53:10538},
in our Definition~\ref{def:QIUGT} we consider that the derivatives of the trajectories 
$x$ may also occur in the parameter family of transformations. This possibility has been 
widely forgotten in the literature of the calculus of variations, being, however, very 
powerful from the point of view of optimal control \cite{MR1901565,delfimEJC,MR2040245}. 
The subject of non-Newtonian optimal control will be addressed in a forthcoming publication.
\end{Remark}

\begin{Theorem}[Noether's theorem]
\label{Th:MainResult}
If \eqref{eqT:J} is invariant under the one-parameter
family of time-space transformations
\begin{equation*}
(t,x) \longrightarrow \left(T(t,x,\widetilde{x},s),X(t,x,\widetilde{x},s)\right)
\end{equation*}
up to the gauge-term $\Phi\left(t,x,\widetilde{x}\right)$, then
\begin{multline}
\label{eq:N:CL}
\left[
L\left(t,x(t),\widetilde{x}(t)\right)
\ominus \frac{\widetilde{\partial} L}{\widetilde{\partial} v}\left(t,x(t),\widetilde{x}(t)\right) \odot \widetilde{x}(t)
\right]\odot 
\left.\frac{\widetilde{\partial}}{\widetilde{\partial} s} T(t,x(t),\widetilde{x}(t),s)\right|_{s = 1} \\
\oplus \frac{\widetilde{\partial} L}{\widetilde{\partial} v}\left(t,x(t),\widetilde{x}(t)\right)
\odot \left.\frac{\widetilde{\partial}}{\widetilde{\partial} s} X(t,x(t),\widetilde{x}(t),s)\right|_{s = 1}
\ominus \Phi(t,x(t),\widetilde{x}(t))
\end{multline}
is constant in $t \in [a,b]$ along any solution 
$x(\cdot) \in C^2\left([a,b]; \mathbb{R}^+\right)$
of the variational problem \eqref{P}.
\end{Theorem}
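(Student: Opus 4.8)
The plan is to reproduce, inside the multiplicative field $(\mathbb{R}^+,\oplus,\odot)$, the classical variational derivation of Noether's theorem, using the non-Newtonian DuBois--Reymond condition (Theorem~\ref{thm:1}) as the decisive ingredient. Throughout I abbreviate the infinitesimal generators of the symmetry by
\[
\tau(t) := \left.\frac{\widetilde{\partial}}{\widetilde{\partial} s} T(t,x(t),\widetilde{x}(t),s)\right|_{s=1},
\qquad
\xi(t) := \left.\frac{\widetilde{\partial}}{\widetilde{\partial} s} X(t,x(t),\widetilde{x}(t),s)\right|_{s=1},
\]
and write $C(t)$ for the quantity \eqref{eq:N:CL} whose constancy is to be established. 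Since a function is constant in the non-Newtonian sense precisely when its derivative $\frac{\tilde{d}}{\tilde{d}t}$ equals the $\oplus$-identity $1$, the goal reduces to proving $\frac{\tilde{d}}{\tilde{d}t} C(t) = 1$ along any Euler--Lagrange extremal.

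First I would differentiate the invariance identity \eqref{eqT:defQI} with respect to $s$ and evaluate at $s = 1$. At the identity one has $T = t$, $X = x$, together with $\frac{\tilde{d}T}{\tilde{d}t} = e$ and $\frac{\tilde{d}X}{\tilde{d}t} = \widetilde{x}$, so that the velocity argument $\frac{\tilde{d}X}{\tilde{d}t}\oslash\frac{\tilde{d}T}{\tilde{d}t}$ collapses to $\widetilde{x}$. Applying the non-Newtonian chain and Leibniz rules to the right-hand side of \eqref{eqT:defQI} then yields the \emph{Noether identity}
\begin{multline*}
\frac{\tilde{d}}{\tilde{d}t}\Phi
= \frac{\widetilde{\partial} L}{\widetilde{\partial} t}\odot\tau
\oplus \frac{\widetilde{\partial} L}{\widetilde{\partial} x}\odot\xi \\
\oplus \frac{\widetilde{\partial} L}{\widetilde{\partial} v}\odot\left(\frac{\tilde{d}\xi}{\tilde{d}t}
\ominus \widetilde{x}\odot\frac{\tilde{d}\tau}{\tilde{d}t}\right)
\oplus L\odot\frac{\tilde{d}\tau}{\tilde{d}t},
\end{multline*}
where the parenthesised factor is the value at $s=1$ of $\frac{\widetilde{\partial}}{\widetilde{\partial} s}\left(\frac{\tilde{d}X}{\tilde{d}t}\oslash\frac{\tilde{d}T}{\tilde{d}t}\right)$, obtained from the non-Newtonian quotient rule and from interchanging $\frac{\widetilde{\partial}}{\widetilde{\partial} s}$ with $\frac{\tilde{d}}{\tilde{d}t}$, so that $\left.\frac{\widetilde{\partial}}{\widetilde{\partial} s}\frac{\tilde{d}T}{\tilde{d}t}\right|_{s=1} = \frac{\tilde{d}\tau}{\tilde{d}t}$ and likewise for $X$.

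Next I would differentiate the candidate $C(t)$ directly. Writing $\mathcal{E} := L \ominus \frac{\widetilde{\partial} L}{\widetilde{\partial} v}\odot\widetilde{x}$ for the DuBois--Reymond \emph{energy}, the non-Newtonian Leibniz rule gives
\begin{multline*}
\frac{\tilde{d}}{\tilde{d}t} C
= \frac{\tilde{d}\mathcal{E}}{\tilde{d}t}\odot\tau
\oplus \mathcal{E}\odot\frac{\tilde{d}\tau}{\tilde{d}t}
\oplus \frac{\tilde{d}}{\tilde{d}t}\!\left(\frac{\widetilde{\partial} L}{\widetilde{\partial} v}\right)\!\odot\xi \\
\oplus \frac{\widetilde{\partial} L}{\widetilde{\partial} v}\odot\frac{\tilde{d}\xi}{\tilde{d}t}
\ominus \frac{\tilde{d}\Phi}{\tilde{d}t}.
\end{multline*}
Here I invoke Theorem~\ref{thm:1} to replace $\frac{\tilde{d}\mathcal{E}}{\tilde{d}t}$ by $\frac{\widetilde{\partial} L}{\widetilde{\partial} t}$, and the Euler--Lagrange equation \eqref{eqT:EL} to replace $\frac{\tilde{d}}{\tilde{d}t}\big(\frac{\widetilde{\partial} L}{\widetilde{\partial} v}\big)$ by $\frac{\widetilde{\partial} L}{\widetilde{\partial} x}$. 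After expanding $\mathcal{E}\odot\frac{\tilde{d}\tau}{\tilde{d}t}$ through the distributivity $[a\ominus b]\odot c = a\odot c \ominus b\odot c$ and substituting the Noether identity for $\frac{\tilde{d}\Phi}{\tilde{d}t}$, the two resulting $\oplus$-groupings coincide termwise, so every term pairs off against its $\ominus$-inverse and we are left with $\frac{\tilde{d}}{\tilde{d}t} C = 1$, which is the asserted conservation law.

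The step I expect to be the main obstacle is the expansion described in the second paragraph: correctly applying the non-Newtonian chain, product, and especially \emph{quotient} rules to the composite $L\left(T,X,\frac{\tilde{d}X}{\tilde{d}t}\oslash\frac{\tilde{d}T}{\tilde{d}t}\right)\odot\frac{\tilde{d}T}{\tilde{d}t}$, and rigorously justifying the interchange of the $s$- and $t$-derivatives at $s=1$. Once the Noether identity is secured, the remaining work is a purely algebraic cancellation in $(\mathbb{R}^+,\oplus,\odot)$, entirely parallel to the classical argument but carried out with $\oplus,\ominus,\odot$ in place of $+,-,\times$.
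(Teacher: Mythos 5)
Your proposal is correct and follows essentially the same route as the paper: both differentiate the invariance condition \eqref{eqT:defQI} at $s=1$ to obtain the Noether identity, and then conclude by invoking the Euler--Lagrange equation \eqref{eqT:EL} together with the DuBois--Reymond condition \eqref{eqT:DBR} of Theorem~\ref{thm:1}. The only difference is bookkeeping: the paper rewrites groups of terms of the differentiated invariance identity as exact $\frac{\tilde{d}}{\tilde{d}t}$-derivatives and substitutes them in, whereas you expand $\frac{\tilde{d}}{\tilde{d}t}C(t)$ by the non-Newtonian Leibniz rule and cancel against that same identity --- the same algebra run in the opposite direction.
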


\begin{proof}
We know that if $x$ is a solution of \eqref{P}, then
it satisfies the Euler--Lagrange equation \eqref{eqT:EL} 
and the DuBois--Reymond condition \eqref{eqT:DBR}.
Having in mind that for $s = 1$ we have the identity transformation,
$T(t,x,\widetilde{x},1) = t$, $X(t,x,\widetilde{x},1) = x$, condition
\eqref{eqT:defQI} yields
\begin{equation}
\label{eqT:diffDefQI}
\begin{split}
\frac{\tilde{d}}{\tilde{d}t} \Phi\left(t,x(t),\widetilde{x}(t)\right) 
&= \frac{\widetilde{\partial} L}{\widetilde{\partial} t}\left(t,x(t),\widetilde{x}(t)\right)
\odot \left.\frac{\widetilde{\partial}}{\widetilde{\partial} s} T\left(t,x(t),\widetilde{x}(t),s\right)\right|_{s = 1} \\
&\quad \oplus \frac{\widetilde{\partial} L}{\widetilde{\partial} x}\left(t,x(t),\widetilde{x}(t)\right) \odot
\left.\frac{\widetilde{\partial}}{\widetilde{\partial} s} X\left(t,x(t),\widetilde{x}(t),s\right)\right|_{s = 1} \\
&\quad \oplus \frac{\widetilde{\partial} L}{\widetilde{\partial} v}\left(t,x(t),\widetilde{x}(t)\right) \odot
\left( \frac{\tilde{d}}{\tilde{d}t}
\left.\frac{\widetilde{\partial}}{\widetilde{\partial} s}
X\left(t,x(t),\widetilde{x}(t),s\right)\right|_{s = 1} \right.\\
&\qquad \left. \ominus \widetilde{x}(t) \odot \frac{\tilde{d}}{\tilde{d}t}
\left.\frac{\widetilde{\partial}}{\widetilde{\partial} s}
T\left(t,x(t),\widetilde{x}(t),s\right)\right|_{s = 1}\right) \\
&\quad \oplus L\left(t,x(t),\widetilde{x}(t)\right) \odot \frac{\tilde{d}}{\tilde{d}t}
\left.\frac{\widetilde{\partial}}{\widetilde{\partial} s} T\left(t,x(t),\widetilde{x}(t),s\right)\right|_{s = 1}\, .
\end{split}
\end{equation}
From \eqref{eqT:EL} one can write
\begin{multline}
\label{eqT:FromEL}
\frac{\widetilde{\partial} L}{\widetilde{\partial} x}\left(t,x(t),\widetilde{x}(t)\right) \odot
\left.\frac{\widetilde{\partial}}{\widetilde{\partial} s} X\left(t,x(t),\widetilde{x}(t),s\right)\right|_{s = 1} 
\oplus \frac{\widetilde{\partial} L}{\widetilde{\partial} v}\left(t,x(t),\widetilde{x}(t)\right) \odot
\frac{\tilde{d}}{\tilde{d}t}
\left.\frac{\widetilde{\partial}}{\widetilde{\partial} s} X\left(t,x(t),\widetilde{x}(t),s\right)\right|_{s = 1} \\
= \frac{\tilde{d}}{\tilde{d}t} \left(
\frac{\widetilde{\partial} L}{\widetilde{\partial} v}\left(t,x(t),\widetilde{x}(t)\right) \odot
\left.\frac{\widetilde{\partial}}{\widetilde{\partial} s} X\left(t,x(t),\widetilde{x}(t),s\right)\right|_{s = 1}
\right) \, ,
\end{multline}
while from \eqref{eqT:DBR} one gets
\begin{equation}
\label{eqT:FromDBR}
\begin{split}
\frac{\widetilde{\partial} L}{\widetilde{\partial} t}&\left(t,x(t),\widetilde{x}(t)\right) \odot
\left.\frac{\widetilde{\partial}}{\widetilde{\partial} s} T\left(t,x(t),\widetilde{x}(t),s\right)\right|_{s = 1}
\oplus L\left(t,x(t),\widetilde{x}(t)\right) \odot \frac{\tilde{d}}{\tilde{d}t}
\left.\frac{\widetilde{\partial}}{\widetilde{\partial} s} T\left(t,x(t),\widetilde{x}(t),s\right)\right|_{s = 1}\\
&\quad \ominus \frac{\widetilde{\partial} L}{\widetilde{\partial} v}\left(t,x(t),\widetilde{x}(t)\right) \odot \widetilde{x}(t)
\odot \frac{\tilde{d}}{\tilde{d}t}
\left.\frac{\widetilde{\partial}}{\widetilde{\partial} s} T\left(t,x(t),\widetilde{x}(t),s\right)\right|_{s = 1}\\
&= \frac{\tilde{d}}{\tilde{d}t} \left\{\left(L\left(t,x(t),\widetilde{x}(t)\right)
\ominus \frac{\widetilde{\partial} L}{\widetilde{\partial} v}\left(t,x(t),\widetilde{x}(t)\right) \odot \widetilde{x}(t)\right)
\odot \left.\frac{\widetilde{\partial}}{\widetilde{\partial} s} T\left(t,x(t),\widetilde{x}(t),s\right)\right|_{s = 1}
\right\} \, .
\end{split}
\end{equation}
Substituting \eqref{eqT:FromEL} and \eqref{eqT:FromDBR} into \eqref{eqT:diffDefQI},
\begin{multline*}
\frac{\tilde{d}}{\tilde{d}t} \left\{
\frac{\widetilde{\partial} L}{\widetilde{\partial} v}\left(t,x(t),\widetilde{x}(t)\right)
\odot \left.\frac{\widetilde{\partial}}{\widetilde{\partial} s} X(t,x(t),\widetilde{x}(t),s)\right|_{s = 1}
\ominus \Phi(t,x(t),\widetilde{x}(t)) \right. \\
\left. \oplus \left(
L\left(t,x(t),\widetilde{x}(t)\right)
\ominus \frac{\widetilde{\partial} L}{\widetilde{\partial} v}\left(t,x(t),\widetilde{x}(t)\right) \odot \widetilde{x}(t)\right)
\odot \left.\frac{\widetilde{\partial}}{\widetilde{\partial} s} T(t,x(t),\widetilde{x}(t),s)\right|_{s = 1} \right\} = 1 \, ,
\end{multline*}
and the intended conclusion is obtained.
\end{proof}


\section{Illustrative Examples}

Our first example is the analog in classical mechanics
to conservation of energy and gives an alternative proof
to the constant of motion \eqref{eq:Erd}.

\begin{Example}[conservation of energy]
\label{ex1}
Let us consider the autonomous problem of the calculus of variations
\eqref{Pa}. It is easy to see that this problem is invariant, in the sense
of Definition~\ref{def:QIUGT}, under the symmetry transformation   
$(t,x) \longrightarrow \left(T(t,s),X(x)\right)$ given by $T = t \oplus s$
and $X = x$, which for $s = 1$ reduces to  the identity transformation,
with $\Phi = \text{constant}$. Indeed, in this case
the invariance condition \eqref{eqT:defQI} is clearly satisfied:
$$
1 = \frac{\tilde{d}}{\tilde{d}s} \left\{L\left(x(t),\widetilde{x}(t)\oslash e\right) \odot e \right\}
\Leftrightarrow 1 = \frac{\tilde{d}}{\tilde{d}s} \left\{L\left(x(t),\widetilde{x}(t)\right)\right\}
\Leftrightarrow 1 = 1.
$$
It follows from Noether's theorem (Theorem~\ref{Th:MainResult}) that
\eqref{eq:N:CL} is a constant of motion, that is,
$$
L\left(x(t),\widetilde{x}(t)\right)
\ominus \frac{\widetilde{\partial} L}{\widetilde{\partial} v}\left(x(t),\widetilde{x}(t)\right) \odot \widetilde{x}(t)
$$	
is constant in $t \in [a,b]$ along any solution 
of the variational problem \eqref{Pa}.
\end{Example}

Our second example is the non-Newtonian analog 
to conservation of momentum in classical mechanics.

\begin{Example}[conservation of momentum]
\label{ex2}	
Let us now consider the following non-Newtonian
problem of the calculus of variations:
\begin{equation}
\label{P:x:absent}
\mathcal{F}[x] = \strokedint_{a}^{b}
L\left(t,\widetilde{x}(t)\right) \tilde{d}t 
\longrightarrow \min_{x \in \mathcal{X}}.
\end{equation}
In this case, since the Lagrangian $L$ does not depend on $x$,
it is trivial to see that the functional $\mathcal{F}[x]$ of \eqref{P:x:absent}
is invariant under the symmetry transformation   
$(t,x) \longrightarrow \left(T(t),X(x,s)\right)$ given by $T = t$
and $X = x\oplus s$, which for $s = 1$ reduces to  the identity transformation,
with $\Phi = \text{constant}$. It follows from 
Theorem~\ref{Th:MainResult} that
$$
\frac{\widetilde{\partial} L}{\widetilde{\partial} v}\left(t,\widetilde{x}(t)\right)
$$
is a constant of motion. 
\end{Example}

From Examples~\ref{ex1} and \ref{ex2}, we are motivated to define
the generalized momentum $p(t)$ as
\begin{equation}
\label{eq:p}
p(t) = \frac{\widetilde{\partial} L}{\widetilde{\partial} v}\left(t,x(t),\widetilde{x}(t)\right)
\end{equation}
and the Hamiltonian function $H$ by
\begin{equation}
\label{eq:H}
H(t,x,v,p) = L\left(t,x,v\right) \ominus p \odot v.
\end{equation}
With definitions \eqref{eq:p} and \eqref{eq:H}, 
the DuBois--Reymond condition \eqref{eqT:DBR}
can be written as
$$
\frac{\tilde{d}}{\tilde{d} t} \left[ H(t,x(t),\widetilde{x}(t),p(t)) \right]
= \frac{\widetilde{\partial} H}{\widetilde{\partial} t}(t,x(t),\widetilde{x}(t),p(t)) 
$$
and the Erdmann condition \eqref{eq:Erd} as
$H(x(t),\widetilde{x}(t),p(t)) = \text{ constant}$.
Such Hamiltonian perspective will be our starting point
to develop a non-Newtonian optimal control theory.
This is under development and will be addressed elsewhere.


\section{Discussion} 

In 1918, Felix Klein (1849--1925) presented the paper 
``Invariant variation problems'' by Emmy Noether (1882--1935)
\cite{JFM46.0770.01,MR53:10538} at a session of the Royal Society 
of Sciences in G\"{o}ttingen. This important paper 
is considered as a milestone in the relation between 
symmetry transformations and conservation laws in physics \cite{MR1116559}. 
Here we have proved such correspondence, between the existence
of symmetry transformations and constants of motion, in
the context of the non-Newtonian calculus of variations recently introduced
by the author in \cite{delfim50}. The main difficult is to obtain
such invariance transformations. For that, one needs to solve a PDE 
that arises from condition \eqref{eqT:defQI} of invariance.
While the resolution of such PDE has been automatized 
in the framework of the classical calculus of variations and optimal control
with the help of a computer algebra system \cite{MR2194205,MR2323264}, 
the automatic computation of conservation laws for non-Newtonian variational 
problems is still under investigation.


\section*{Acknowledgement}

This research was supported by FCT and the 
Center for Research and Development in Mathematics 
and Applications (CIDMA), project UIDB/04106/2020.


\small


\end{document}